\newtheorem{thm}{Theorem}
\newtheorem{prop}[thm]{Proposition}
\newtheorem{lem}[thm]{Lemma}
\theoremstyle{remark}
\theoremstyle{definition}
\newcommand{\R}{{\mathbb{R}}}
\newcommand{\C}{{\mathbb{C}}}
\newcommand{\Z}{{\mathbb{Z}}}
\newcommand{\End}{\mathrm{End}}
\newcommand{\GL}{\mathrm{GL}}
\newcommand{\SL}{\mathrm{SL}}
\newcommand{\diag}{\mathrm{diag}}
\newcommand{\ad}{\mathrm{ad}}
\renewcommand{\sl}{\mathfrak{sl}}
\newcommand{\g}{\mathfrak g}
\newcommand{\frakt}{\mathfrak t}
\newcommand{\gl}{\mathfrak{gl}}
\newcommand{\Alt}{{\raise 2pt\hbox{$\scriptstyle\bigwedge$}}}
\begin{document}
\title{Disjointness of a simple matrix Lie group and its Lie algebra}

\author{Michael Larsen}
\email{mjlarsen@indiana.edu}
\address{Department of Mathematics\\
    Indiana University \\
    Bloomington, IN 47405\\
    U.S.A.}

    \subjclass{Primary 22E60}

\thanks{The author was partially supported by NSF grant DMS-2001349.}

\begin{abstract}
Let $G$ be a connected closed subgroup of $\GL_n(\C)$ which is simple as a Lie group and which acts irreducibly on $\C^n$.  Regarding both $G$ and its Lie algebra $\g$ as subsets of $M_n(\C)$,
we have $G\cap \g\neq\emptyset$ if and only if $G$ is a classical group and $\C^n$ is a minuscule representation.
\end{abstract}

\maketitle

\begin{center}
\textit{To Pham Huu Tiep on his sixtieth birthday, with admiration}
\end{center}
\vskip 10pt
Let $G$ be a simple complex Lie group with Lie algebra $\g$ and $\rho\colon G\to \GL(V)$ a faithful, irreducible, complex representation of $G$.  Let $\rho_*\colon \g\to \gl(V)$ denote the corresponding Lie algebra representation.
Using the embeddings $G\stackrel{\rho}{\hookrightarrow}\GL(V)\hookrightarrow \End(V)$ and $\g\stackrel{\rho_*}{\hookrightarrow}\gl(V) = \End(V)$, it makes
sense to ask if $G\cap \g$, by which we abbreviate $\rho(G)\cap \rho_*(\g)$, is empty.  As we will see, the answer is usually yes, but the exceptions form a class which has received attention before (see \cite{Serre}).

Recall \cite[VIII, \S7.3]{Bourbaki7} that $V$ is \emph{minuscule} if it is non-trivial and its weights with respect to a choice of maximal torus form a single orbit under the action of the Weyl group.  The minuscule representations are as follows \cite[Appendix]{Serre}:  for type $A_r$, all exterior powers of the natural representation; for $B_r$, the spin representation; for $C_r$, the natural representation; for $D_r$, the natural representation and the semispin representations; for $E_6$, the two $27$-dimensional irreducibles; and for $E_7$, the unique $56$-dimensional irreducible.

The main result of this paper is as follows:

\begin{thm}
\label{main}
Under the above hypotheses, $G\cap \g$ is non-empty if and only if $G$ is of classical type and $V$ is minuscule.
\end{thm}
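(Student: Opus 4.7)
The plan is to reduce the question to an eigenvalue-matching problem on a maximal torus, establish the forward direction by explicit construction, and establish the backward direction by weight-system arguments together with special arguments for the exceptional cases.

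\emph{Reduction and weight equation.} If $g \in \rho(G) \cap \rho_*(\g)$, the $\End(V)$-semisimple part $g_s$ lies in $\rho(G)$ via the multiplicative Jordan decomposition in $G$ and in $\rho_*(\g)$ via the additive Jordan decomposition in $\g$, so we may assume $g$ semisimple. A conjugation argument using Cartan subalgebras of the reductive centralizer $Z_G(g)$ further allows us to assume $g = \rho(t) = \rho_*(X)$ with $t \in T$ and $X \in \frakt := \mathrm{Lie}(T)$ in a common maximal torus. Comparing actions on the weight space $V_\lambda$ gives the fundamental system
\[
\lambda(t) = d\lambda(X) \qquad \text{for every weight } \lambda \text{ of } V.
\]
If $V$ has a zero weight, the equation at $\lambda=0$ forces $\lambda(t)=0$, contradicting invertibility of $\rho(t)$; this already rules out all nontrivial irreducibles of $G_2$, $F_4$, $E_8$ (whose weight lattices coincide with root lattices) and many representations of classical types.

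\emph{Classical minuscule (easy direction).} For $\SL_n$ on $\C^n$, any tuple $(a_1,\dots,a_n)$ with $\sum a_i = 0$ and $\prod a_i = 1$ yields $\diag(a_i) \in \SL_n \cap \sl_n$, and such tuples clearly exist. For $\Sp_{2n}$ and $\SO_{2n}$ on their natural representations, the standard complex structure $J = \left(\begin{smallmatrix} 0 & -I \\ I & 0 \end{smallmatrix}\right)$ satisfies $J^2 = -I$, is both symplectic/orthogonal and skew-symmetric, and hence lies in $G \cap \g$. For $\bigwedge^k\C^n$ of $\SL_n$ and for the spin and semispin representations, analogous direct constructions work, guided by the symmetry of the weight system.

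\emph{Non-minuscule (hard direction).} For $V$ non-minuscule without zero weight, some weight $\mu$ and root $\alpha$ satisfy $\langle\mu,\alpha^\vee\rangle \ge 2$, so $V$ contains three consecutive weights $\mu-\alpha,\mu,\mu+\alpha$. The relation $(\mu-\alpha)+(\mu+\alpha) = 2\mu$ translates into $(\mu-\alpha)(t)\cdot(\mu+\alpha)(t) = \mu(t)^2$ and $d(\mu-\alpha)(X)+d(\mu+\alpha)(X) = 2\,d\mu(X)$, forcing the three eigenvalues to coincide and in particular $d\alpha(X)=0$. By $W$-equivariance, every root in the Weyl orbit of $\alpha$ has this property, and since for every simple type the $W$-orbit of any root spans $\frakt^*$ over $\R$, we obtain $X=0$, contradicting $\rho(t) = \rho_*(0) = 0 \notin \rho(G)$.

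\emph{Exceptional minuscule.} The remaining cases are the $27$-dimensional representation of $E_6$ and the $56$-dimensional representation of $E_7$. For the $56$ of $E_7$, self-duality ($-I \in W$) pairs weights as $\pm\lambda$, and combining $\lambda(t)^{-1} = (-\lambda)(t) = d(-\lambda)(X) = -d\lambda(X)$ forces $\lambda(t)^2 = -1$: every eigenvalue of $g$ is $\pm i$. In the realization of the $56$ weights as $\pm(e_i+e_j)$ in $\Z^8/\Z(1,\dots,1)$ coming from the embedding $\SU(8) \subset E_7$, an elementary computation shows that no $y \in \R^8$ with $\sum y_i = 0$ satisfies $y_i+y_j = \pm 1$ for every $i<j$, so no $X \in \frakt$ realizes the required eigenvalues. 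For the $27$ of $E_6$ — not self-dual — we use the cubic $E_6$-invariant $\Phi$: for each triple of weights with $\lambda_1+\lambda_2+\lambda_3=0$ and nonzero component of $\Phi$, invariance under $G$ and $\g$ yields $\mu_{\lambda_1}\mu_{\lambda_2}\mu_{\lambda_3}=1$ and $\mu_{\lambda_1}+\mu_{\lambda_2}+\mu_{\lambda_3}=0$; propagating these constraints through the combinatorics of the $45$ such triples and the $\Z/3$-structure of the center produces the contradiction. Executing this final case analysis carefully for the two exceptional minuscule representations is the main technical obstacle.
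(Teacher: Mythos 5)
Your reduction to a common maximal torus, the fundamental equation $\lambda(t)=d\lambda(X)$, and the non-minuscule case via a weight string $\mu-\alpha,\mu,\mu+\alpha$ all track the paper's own argument (its Lemma~\ref{AP} and the root-string bound from Humphreys); your way of closing that case --- $d\alpha(X)=0$ for a full Weyl orbit of roots, which spans $\frakt^*$, hence $X=0$ --- is a clean variant of the paper's trace argument and works. Your $E_7$ treatment is sound and genuinely different: after forcing every eigenvalue to be $\pm i$ (the paper's Lemma~\ref{self-dual}), you reduce to showing that no traceless $y\in\C^8$ satisfies $y_i+y_j=\pm i$ for all $i<j$ in the $A_7$ realization of the $56$ weights as $\pm(e_i+e_j)$. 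That computation does close (the three pairwise sums among any three coordinates force each $y_k\in\{\pm i/2,\pm 3i/2\}$, and no admissible assignment has zero sum), whereas the paper instead invokes the facet structure of the polytope $3_{21}$; you should actually write the computation out, but it is elementary.

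The genuine gap is the $27$-dimensional representation of $E_6$, which you yourself label ``the main technical obstacle'' and do not execute. The constraints you extract from triples of weights with $\lambda_1+\lambda_2+\lambda_3=0$ (namely $\mu_1\mu_2\mu_3=1$ and $\mu_1+\mu_2+\mu_3=0$; note you do not need the cubic invariant for this --- the character identity alone gives both relations, exactly as in the paper's Lemma~\ref{quadrangle}) are correct but do not by themselves produce a contradiction: each triple only says its three eigenvalues are the roots of some cubic $z^3+cz-1$ with $c$ varying from triple to triple, so a real combinatorial argument over the $45$ triples, or equivalently over the incidence structure of the $27$ lines, is still required. This is precisely where the paper spends most of its effort: a convexity argument with the polytope $2_{21}$ to show the eigenvalues take at least three distinct values, followed by a four-case Schl\"afli double-six analysis to show they take at most two. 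Separately, the sufficiency direction is under-argued: ``analogous direct constructions work'' does not suffice for $\wedge^j\C^m$ with $2\le j\le m-2$ (the paper needs the non-obvious element $\diag(a,\ldots,a,a^{1-m})$ with $a^m=j/(j-m)$), nor for the spin and semispin representations, where the paper proves a separate lemma verifying that a torus Lie algebra element with all weight values $\pm i$ actually lands in $\rho(T)$ --- this requires checking every multiplicative relation among the weights, not just appealing to the symmetry of the weight system.
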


The rest of the paper proves the theorem.  The main difficulty is to show necessity of the conditions on $(G,V)$.

Let $g\in G$ and $x\in \g$ satisfy $\rho(g) = \rho_*(x)$.
The Jordan decomposition $g = tu$ in $G$ gives the Jordan decomposition $\rho(g) = \rho(t)\rho(u)$ in $\GL(V)$.  In $M_n(\C)$, we can write $\rho(u) = 1+N$, where $N$ is nilpotent and commutes with $\rho(t)$,
so $\rho(t) +\rho(t)N$ is the Jordan-Chevalley decomposition of $\rho_*(x) = \rho(g)$ in $\End(V)$.  By \cite[Chap.~4, Lemma~A]{Hump}, it follows that $\rho(t)\in \rho_*(\g)$.  Without loss of generality, therefore,
we may assume that $g$ is semisimple, and redefining $x$ if necessary,
we still have $\rho(g) = \rho_*(x)$.  Let $T$ be a maximal torus of $G$ containing $g$, and let $\frakt$ denote its Lie algebra.  

Now, $\rho_*(x)$ commutes with $\rho(t)$ for all $t\in T$.  As $\rho_*$ is injective,
$\ad(t)(x) = x$ for all $t\in T$, so $x$ lies in the weight-$0$ space of the adjoint representation of $G$.
In other words, $x \in \frakt$.

As usual, we identify the character group $X^*(T)$ with a subgroup of $\frakt^*$ by means of the diagram
$$\xymatrix{\frakt\ar[r]^{\exp}\ar[d]_{t^*}&T\ar[d]^\chi\\
\C\ar[r]^\exp&\C^\times}$$
relating a character $\chi$ and its corresponding vector $t^*$.
We choose a basis $e_1,\ldots,e_n$ of $V$ consisting of weight vectors for $T$ for characters $\chi_1,\ldots,\chi_n\in X^*(T)$ and let $t^*_1,\ldots,t^*_n$ denote the elements of $\frakt^*$ corresponding to $\chi_1,\ldots,\chi_n$ respectively.
If $a_1,\ldots,a_n\in\Z$ and $\chi_1^{a_1}\cdots \chi_n^{a_n} = 1$, then $a_1t^*_1+\cdots+a_nt^*_n=0$.

\begin{lem}
\label{AP}
If $\chi_i,\chi_j,\chi_k$ are three characters of $V$ such that $\chi_i\chi_k = \chi_j^2$, then $\chi_i(g)=\chi_j(g)=\chi_k(g)$.
\end{lem}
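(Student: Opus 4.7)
The plan is to translate the hypothesis of the lemma into two simultaneous identities: a multiplicative one coming from $\rho(g)$ and an additive one coming from $\rho_*(x)$, and then observe that these identities force $\chi_i(g)$ and $\chi_k(g)$ to be a double root.

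The starting point is that both $\rho(g)$ and $\rho_*(x)$ are diagonal in the basis $e_1,\ldots,e_n$: the $i$-th diagonal entry of $\rho(g)$ is $\chi_i(g)$, while the $i$-th diagonal entry of $\rho_*(x)$ is $t^*_i(x)$. Since $\rho(g) = \rho_*(x)$ by assumption, we get
\[
\chi_i(g) = t^*_i(x) \quad \text{for } i=1,\ldots,n.
\]
Next, I would exploit the principle indicated just before the lemma: the multiplicative relation $\chi_i\chi_k = \chi_j^2$ among characters of $T$ translates, via the exponential diagram, into the additive relation $t^*_i + t^*_k = 2 t^*_j$ in $\frakt^*$. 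Evaluating both sides at $x\in\frakt$ and using the displayed identity gives
\[
\chi_i(g) + \chi_k(g) = 2\chi_j(g).
\]
On the other hand, $\chi_i\chi_k = \chi_j^2$ as characters evaluated at $g\in T$ yields directly the multiplicative identity
\[
\chi_i(g)\chi_k(g) = \chi_j(g)^2.
\]

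Now $\chi_i(g)$ and $\chi_k(g)$ are the two roots (with multiplicity) of the monic polynomial $z^2 - (\chi_i(g)+\chi_k(g))z + \chi_i(g)\chi_k(g)$, which by the two identities above equals $z^2 - 2\chi_j(g) z + \chi_j(g)^2 = (z-\chi_j(g))^2$. Hence both roots coincide with $\chi_j(g)$, proving $\chi_i(g) = \chi_j(g) = \chi_k(g)$.

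There is no substantial obstacle: the whole content of the argument is that equating the diagonal of $\rho(g)$ with the diagonal of $\rho_*(x)$ forces a multiplicative identity on values of characters at $g$ to coexist with the corresponding additive identity on their differentials evaluated at $x$, and the only way the arithmetic and geometric means of two complex numbers can agree is when those numbers are equal.
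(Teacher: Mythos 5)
Your proof is correct and follows essentially the same route as the paper: both derive the additive identity $\chi_i(g)+\chi_k(g)=2\chi_j(g)$ from $t^*_i-2t^*_j+t^*_k=0$ evaluated at $x$, combine it with the multiplicative identity $\chi_i(g)\chi_k(g)=\chi_j(g)^2$, and conclude that $\chi_i(g)$ and $\chi_k(g)$ coincide with $\chi_j(g)$. Your packaging of the final step as a monic quadratic with a double root is just a rephrasing of the paper's computation $(\chi_i(g)+\chi_k(g))^2=4\chi_i(g)\chi_k(g)$.
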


\begin{proof}
As $\chi_i \chi_j^{-2} \chi_k = 1$, we have $t^*_i-2t^*_j+t^*_k=0$.  As $\rho(g) = \rho_*(x)$, we have
$$\chi_i(g) = t^*_i(x),\,\chi_j(g) = t^*_j(x),\,\chi_k(g) = t^*_k(x),$$
so we have
$$4\chi_i(g)\chi_k(g) = 4\chi_j(g)^2 = 4t^*_j(x)^2 = (t^*_i(x)+t^*_k(x))^2 = (\chi_i(g) + \chi_k(g))^2,$$
so $\chi_i(g) = \chi_k(g)$.  Thus,
$$\chi_j(g) = t^*_j(x) = \frac{t^*_i(x)+t^*_k(x)}2 = \frac{\chi_i(g)+\chi_k(g)}2 = \chi_i(g).$$
\end{proof}

\begin{lem}
\label{quadrangle}
If $\chi_i\chi_j = \chi_k\chi_l$ for characters of $V$, then 
\begin{equation}
\label{quadratic}
(z-\chi_i(g))(z-\chi_j(g)) = (z-\chi_k(g))(z-\chi_l(g)).
\end{equation}
\end{lem}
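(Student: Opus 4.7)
The plan is to reduce the desired polynomial identity to Vieta: since both sides of \eqref{quadratic} are monic quadratics in $z$, they are equal if and only if the sums and the products of their roots agree, i.e.
\begin{align*}
\chi_i(g)+\chi_j(g) &= \chi_k(g)+\chi_l(g),\\
\chi_i(g)\chi_j(g) &= \chi_k(g)\chi_l(g).
\end{align*}
So the proof amounts to extracting these two identities from the single hypothesis $\chi_i\chi_j=\chi_k\chi_l$.

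The product identity is immediate: evaluating the multiplicative relation $\chi_i\chi_j=\chi_k\chi_l$ at $g\in T$ gives $\chi_i(g)\chi_j(g)=\chi_k(g)\chi_l(g)$.

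For the sum identity I would mimic the mechanism of Lemma~\ref{AP}: from $\chi_i\chi_j\chi_k^{-1}\chi_l^{-1}=1$ the diagram relating characters to $\frakt^*$ yields the \emph{additive} relation
\[ t^*_i+t^*_j-t^*_k-t^*_l=0 \]
in $\frakt^*$. Evaluating this linear functional at $x\in\frakt$ and using $t^*_m(x)=\chi_m(g)$ (which comes from $\rho_*(x)=\rho(g)$, applied to the weight vector $e_m$) gives the sum identity. Combining the two identities finishes the proof.

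No step looks like a serious obstacle here; the only thing to be careful about is keeping straight the two parallel ways characters interact with $g$ and with $x$ (multiplicatively through $g$, additively through $x$ via the logarithmic identification $X^*(T)\hookrightarrow\frakt^*$). The lemma is essentially the observation that, under the identity $\rho(g)=\rho_*(x)$, any \emph{linear} relation among characters that comes from a multiplicative one produces simultaneously a matching sum relation among the eigenvalues $\chi_m(g)$, which together with the trivially inherited product relation pins down the quadratic.
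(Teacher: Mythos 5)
Your proposal is correct and follows essentially the same route as the paper: the product of the eigenvalues comes from evaluating the multiplicative relation at $g$, the sum comes from evaluating the induced additive relation $t^*_i+t^*_j=t^*_k+t^*_l$ at $x$ using $t^*_m(x)=\chi_m(g)$, and the two together give the equality of monic quadratics. Nothing further is needed.
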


\begin{proof}
The character equality implies
\begin{equation}
\label{equal sum}
t^*_i+t^*_j = t^*_k+t^*_l,
\end{equation}
so
$$\chi_i(g)+\chi_j(g) = t^*_i(x)+t^*_j(x) = t^*_k(x) + t^*_l(x) = \chi_k(g)+\chi_l(g).$$
As $\chi_i(g)\chi_j(g) = \chi_k(g)\chi_l(g)$, equation \eqref{quadratic} follows immediately.
\end{proof}

\begin{lem}
\label{self-dual}
If $V$ is self-dual, then $\chi_i(g)^2+1=0$ for all $\chi_i$.
\end{lem}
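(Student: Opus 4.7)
My plan rests on two observations: self-duality provides an involution pairing the weights into inverses, and the trivial character appears naturally among all products $\chi_i\chi_{i^*}$, so that Lemma \ref{quadrangle} applies in a very uniform way. Since $V\cong V^*$ as $G$-modules, after reindexing the weight basis there is an involution $i\mapsto i^*$ on $\{1,\ldots,n\}$ with $\chi_{i^*}=\chi_i^{-1}$ as characters of $T$. Because $X^*(T)$ is torsion-free, a fixed point of this involution can arise only from $\chi_i=1$, i.e.\ the trivial weight.

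The main step is then the following. For any two indices $i,j$ we have $\chi_i\chi_{i^*}=1=\chi_j\chi_{j^*}$, so Lemma \ref{quadrangle} gives $(z-\chi_i(g))(z-\chi_{i^*}(g))=(z-\chi_j(g))(z-\chi_{j^*}(g))$, so both $c:=\chi_i(g)+\chi_{i^*}(g)$ and $d:=\chi_i(g)\chi_{i^*}(g)$ are independent of $i$. The constant $d$ is immediate: $d=(\chi_i\chi_{i^*})(g)=1$. To pin down $c$, I would use that $\g$ is simple and hence perfect, so $\rho_*(\g)\subseteq \sl(V)$ and $\tr\rho_*(x)=0$; combined with $\rho(g)=\rho_*(x)$ this gives $\sum_i\chi_i(g)=\tr\rho(g)=0$. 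Assuming no trivial weight occurs, the involution is fixed-point-free, $n$ is even, and pairing gives $(n/2)c=0$, so $c=0$. Then $\chi_{i^*}(g)=-\chi_i(g)$ and $-\chi_i(g)^2=d=1$, i.e.\ $\chi_i(g)^2+1=0$.

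The step I expect to require most care is excluding the presence of a trivial weight, since if some $\chi_{i_0}=1$ appeared, then $\chi_{i_0}(g)=1$ would directly contradict the desired conclusion $\chi_{i_0}(g)^2+1=0$. I would dispose of this by applying Lemma \ref{quadrangle} to $\chi_{i_0}\chi_{i_0}=\chi_i\chi_{i^*}$, which forces $(z-1)^2=(z-\chi_i(g))(z-\chi_{i^*}(g))$ and hence $\chi_i(g)=1$ for every $i$; but then $\sum_i\chi_i(g)=n\neq 0$ contradicts $\tr\rho_*(x)=0$, so no trivial weight can appear under the hypotheses of the lemma.
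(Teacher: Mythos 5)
Your proof is correct, but it takes a genuinely different and noticeably longer route than the paper's. The paper exploits the relation $\chi_i\chi_j=1$ directly at the level of $\frakt^*$: by the principle stated before Lemma~\ref{AP}, it gives $t_i^*+t_j^*=0$, hence $\chi_i(g)=t_i^*(x)=-t_j^*(x)=-\chi_j(g)$, and combining this with the multiplicative identity $\chi_j(g)=\chi_i(g)^{-1}$ yields $\chi_i(g)^2=-1$ in one line, pair by pair. Because you only invoke Lemma~\ref{quadrangle} --- which requires expressing the trivial character as a product of two weights of $V$ --- you can only compare distinct inverse pairs against each other, so you learn that $c=\chi_i(g)+\chi_{i^*}(g)$ is constant but not that it vanishes; you then need the global input $\tr\rho_*(x)=0$ together with the fixed-point analysis of the involution to force $c=0$. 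All of that machinery is sound (and your disposal of a putative trivial weight via $(z-1)^2=(z-\chi_i(g))(z-\chi_{i^*}(g))$ works, though such a weight dies even faster: it would give $\chi_{i_0}(g)=t_{i_0}^*(x)=0$, contradicting $\chi_{i_0}(g)=1$). What the paper's argument buys is locality and brevity: it needs nothing about the global weight multiset, no trace computation, and no case split on whether the trivial weight occurs; what yours illustrates is that the conclusion can also be extracted purely from Lemma~\ref{quadrangle} plus the trace-zero constraint, without returning to the defining relation $\chi_i(g)=t_i^*(x)$ for individual weights beyond what those two inputs already encode.
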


\begin{proof}
Let $\chi_i\chi_j=1$.  Then $t^*_i+t^*_j=0$, and
$$\chi_i(g) = t^*_i(x) = -t^*_j(x) = -\chi_j(g) = -\chi_i(g)^{-1}.$$
Thus $\chi_i(g)^2+1=0$.
\end{proof}

\begin{prop}
If $V$ is not a minuscule representation, then $G\cap \g=\emptyset$.
\end{prop}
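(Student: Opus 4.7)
The plan is to assume, for contradiction, that $V$ is non-minuscule yet there exist $g \in G$ and $x \in \g$ with $\rho(g) = \rho_*(x)$; by the paragraph preceding Lemma~\ref{AP} we may take $g$ semisimple in $T$ and $x \in \frakt$. I would extract identities of the form $\alpha(g) = 1$ for roots $\alpha$ by applying Lemma~\ref{AP} to three-term arithmetic progressions of weights, propagate them via Weyl symmetry, and then derive a contradiction either from the centrality of $g$ (in simply-laced types) or from Lemma~\ref{self-dual} (in non-simply-laced types).

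First I would dispose of the zero-weight case: if the trivial character is a weight of $V$, then $t^*_0 = 0$ gives $1 = \chi_0(g) = t^*_0(x) = 0$, a contradiction, so we may assume the highest weight $\lambda$ does not lie in the root lattice $Q$. Under this assumption I claim $V$ contains three weights $\mu, \mu - \alpha, \mu - 2\alpha$ in arithmetic progression for some weight $\mu$ and some root $\alpha$. Otherwise every $\alpha$-string of weights has length at most two, and the only irreducible representations satisfying this constraint are either minuscule (excluded by hypothesis) or quasi-minuscule with highest weight equal to the highest short root, which lies in $Q$ (excluded). Lemma~\ref{AP} then gives $\alpha(g) = 1$, and applying any $w \in W$ to the AP yields another AP along $w\alpha$, so $\beta(g) = 1$ for every $\beta$ in the Weyl orbit of $\alpha$.

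If $G$ is of type $A$, $D$, or $E$, then all roots form a single Weyl orbit, so $\beta(g) = 1$ for every root and $g$ lies in the center of $G$. By Schur's lemma, $\rho(g) = cI$ is scalar, and since $\rho_*(\g) \subseteq \sl(V)$ (because $\g$ is perfect), we get $c \cdot \dim V = \tr \rho_*(x) = 0$, hence $c = 0$, contradicting $\rho(g) \in \GL(V)$.

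For the non-simply-laced cases, $F_4$ and $G_2$ have weight lattice equal to root lattice, so $\lambda \in Q$ always and the zero-weight step applies. This leaves $B_r$ and $C_r$, where $-1 \in W$ so $V$ is self-dual; Lemma~\ref{self-dual} then gives $\chi(g)^2 = -1$ for every weight $\chi$. On the other hand, the conclusion $\beta(g) = 1$ on one Weyl orbit, combined with the arithmetic relations between long and short roots (long roots of $B_r$ are integer sums of short roots, and in $C_r$ one has $2\e_i = (\e_i + \e_j) + (\e_i - \e_j)$), forces the Cartan characters to satisfy $\e_i(g)^2 = 1$; hence $\chi(g) \in \{\pm 1\}$ and $\chi(g)^2 = 1$, contradicting Lemma~\ref{self-dual}. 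The main obstacles I anticipate are the structural claim above (existence of a 3-AP when $V$ has no zero weight), which rests on the classification of minuscule and quasi-minuscule representations, and the bridging argument for $B_r$ and $C_r$, where the single Weyl orbit of $\alpha$ need not contain both root lengths and one must invoke the root-lattice relations to reach the contradiction with self-duality.
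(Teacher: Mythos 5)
Your overall strategy---produce a three-term arithmetic progression of weights, apply Lemma~\ref{AP} to force a Weyl orbit of roots to take the value $1$ on $g$, and conclude with the trace-zero contradiction---is essentially the paper's, and your treatment of the simply-laced types, of $F_4$ and $G_2$ (via the nice zero-weight observation, which the paper does not need), and of $C_r$ is sound. But there is a genuine gap in type $B_r$. Your structural claim only produces a $3$-term string along \emph{some} root $\alpha$, and if that $\alpha$ is long your fallback fails: from $(\e_i+\e_j)(g)=(\e_i-\e_j)(g)=1$ you correctly get that all $\e_i(g)$ equal a common sign $\delta\in\{\pm1\}$, but the step ``hence $\chi(g)\in\{\pm1\}$'' is false for spinorial weights $\chi=\sum_i c_i\e_i$ with $c_i\in\Z+\tfrac12$ --- which is exactly the case remaining after your zero-weight reduction, since $\lambda\notin Q=\Z^r$ forces all weights of $V$ to be half-integral. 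There one only gets $\chi(g)^2=\prod_i\delta^{2c_i}=\delta^r$, which equals $-1$ when $\delta=-1$ and $r$ is odd; this is perfectly consistent with Lemma~\ref{self-dual}, so no contradiction results. A second, smaller issue is that the structural claim is asserted rather than proved, and as stated (``minuscule or quasi-minuscule'') it is not quite right: a quasi-minuscule representation does have a length-$3$ string through its highest weight in the direction of the highest short root.

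Both problems disappear if you do what the paper does: take the string through $\lambda$ in the direction of the coroot $\alpha^\vee$ that is the highest root of the dual root system. Its length is $s+1$ with $s=\langle\lambda,\alpha^\vee\rangle$ by \cite[\S21.3]{Hump}, and $s\ge 2$ for every non-minuscule dominant $\lambda$ by \cite[Appendix]{Serre}; this hands you a $3$-term progression along a \emph{short} root. Since the short roots form a single Weyl orbit and generate the root lattice in every type, Lemma~\ref{AP} then forces every element of the root lattice to take the value $1$ on $g$, so $\rho(g)$ is an invertible scalar, contradicting $\tr\rho_*(x)=0$. This uniform argument makes the case division by type, the zero-weight reduction, and the appeal to Lemma~\ref{self-dual} all unnecessary.
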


\begin{proof}
We fix a maximal torus and a Weyl chamber.
Let $\varpi_1,\ldots,\varpi_r$ denote the fundamental characters, and let $\lambda = \sum_i a_i \varpi_i$ denote the dominant weight of $V$.
Let $\alpha$ denote the highest root in the dual root system.  Let $s = \langle \lambda,\alpha^\vee\rangle$.  By \cite[\S21.3]{Hump}, the length of the $\alpha^\vee$-string of weights of $V$  through $\lambda$ is $s+1$.  If $s\ge 2$, then by Lemma~\ref{AP}, $\alpha^\vee(g) = 1$.  By symmetry,
the same is true for all short roots.  Since the root lattice is generated by short roots, it is true for character in the root lattice, so  $\rho(g)$ is an invertible scalar matrix.  However, $\rho_*(x)$ lies in $\sl(V)$ and therefore has trace $0$,
so this is impossible.  Therefore, $s\le 1$.  Now, $\langle \varpi_i,\alpha^\vee\rangle \ge 1$ with equality if and only if $\varpi_i$ is a minuscule weight \cite[Appendix]{Serre}.  Therefore the proposition holds whenever $\lambda$ is a fundamental weight.
On the other hand, if $\sum_i a_i\ge 2$, then
$$\langle \lambda,\alpha^\vee\rangle  = \sum_i a_i \langle \varpi_i,\alpha^\vee\rangle \ge \sum_i a_i \ge 2,$$
and the proposition again holds.
\end{proof}

\begin{prop}
If $G$ is of type $E_6$ and $\dim V = 27$, then $G\cap \g=\emptyset$.
\end{prop}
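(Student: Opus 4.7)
My plan is to parametrize the common eigenvalues $z_\chi := \chi(g) = t^*_\chi(x)$ over the 27 weights of $V$ and derive a contradiction from the abundance of zero-sum triples among those weights. After the reduction at the start of the paper, I may assume $g \in T$ is semisimple and $x \in \frakt$.

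Following the classical bijection between the 27 weights of $V$ and the 27 lines on a smooth cubic surface, I label them (after fixing a double-six) as $a_1, \ldots, a_6$, $b_1, \ldots, b_6$, $c_{ij} = c_{ji}$ ($1 \le i < j \le 6$). The $45$ zero-sum triples of weights --- equivalently, the monomials of the $E_6$-invariant cubic form on $V$ --- split, relative to this labeling, into two types: the $30$ triples $a_i + b_j + c_{ij} = 0$ with $i \ne j$ (Type~I), and the $15$ triples $c_{ij} + c_{kl} + c_{mn} = 0$ for partitions $\{i,j\} \sqcup \{k,l\} \sqcup \{m,n\}$ of $\{1, \ldots, 6\}$ (Type~II). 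Each such triple contributes both a multiplicative identity $z_i z_j z_k = 1$ (from the character side) and an additive identity $z_i + z_j + z_k = 0$ (from the linear side), in the spirit of Lemma~\ref{quadrangle}.

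Writing $A_i := z_{a_i}$, $B_i := z_{b_i}$, $C_{ij} := z_{c_{ij}}$, the identity $\chi_{a_i}\chi_{b_j} = \chi_{a_j}\chi_{b_i}$ (both equal to $\chi_{c_{ij}}^{-1}$) shows that $\eta := A_i / B_i$ is independent of $i$, whence $C_{ij} = \eta/(A_iA_j)$. Substituting into the additive Type~I identity gives $A_i + A_j/\eta + \eta/(A_iA_j) = 0$, and comparing it with its $(i,j) \leftrightarrow (j,i)$ swap yields $A_iA_j(A_i - A_j)(\eta - 1) = 0$. So either $\eta = 1$, or all $A_i$ coincide; in the latter case, a Type~II additive relation reads $3\eta/A^2 = 0$, which is impossible. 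Hence $\eta = 1$, the pairwise Type~I constraints become
\[
A_iA_j(A_i + A_j) = -1 \qquad \text{for all } i \ne j,
\]
and $\prod_i A_i = \eta^3 = 1$ from any Type~II multiplicative relation. In particular, $\alpha^3 = -1/2$ whenever a value $\alpha$ is attained by at least two of the $A_i$.

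Subtracting two of the displayed pairwise equations with a common index yields $A_i(A_j - A_k)(A_i + A_j + A_k) = 0$, so for every triple of distinct indices either two of the $A$-values agree or all three sum to zero. This forces the number of distinct values among $A_1, \ldots, A_6$ to be at most three, and a direct case analysis on the multiplicity pattern produces a contradiction in every case. For instance, in the pattern $(4,1,1)$ --- with $\alpha$ of multiplicity $4$ and singletons $\beta, \gamma$ --- one has $\alpha^3 = -1/2$, and the pair constraint on $(\beta, \gamma)$ together with $\beta + \gamma = -\alpha$ gives $\alpha\beta\gamma = 1$, so $\prod_i A_i = \alpha^4 \beta\gamma = \alpha^3 = -1/2 \ne 1$, a contradiction; the six remaining patterns fail similarly. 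The main obstacle is that the Type~I relations alone allow a six-parameter family of solutions (reflecting the full dimension of $T$), so the Type~II relations --- through both their additive and multiplicative incarnations --- are indispensable.
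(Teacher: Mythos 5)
Your argument is correct, but it takes a genuinely different route from the paper's. The paper proves that the set of values $\{t^*_i(x)\}$ has at least three elements (if it had only two, the $27$ weights would lie on two parallel hyperplanes supporting facets of the polytope $2_{21}$, whose facets have $6$ or $10$ vertices) and at most two elements (via Lemma~\ref{quadrangle} applied to relations $\chi_i\chi_j=\chi_k\chi_l$, organized by a four-case analysis on the skew/incident structure of the $27$ lines). You instead use the $45$ zero-sum triples of weights (the tritangent planes, equivalently the monomials of the invariant cubic form), each of which yields both a multiplicative relation $z_iz_jz_k=1$ and an additive relation $z_i+z_j+z_k=0$ on the common values $z_i=\chi_i(g)=t^*_i(x)$, and you solve the resulting system outright. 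I checked the computation: the constancy of $\eta$, the elimination of $B_i$ and $C_{ij}$, the relations $A_iA_j(A_i+A_j)=-1$ and $\prod_i A_i=1$, the identity $A_i(A_j-A_k)(A_i+A_j+A_k)=0$, the bound of three distinct values, and all seven multiplicity patterns $(6)$, $(5,1)$, $(4,2)$, $(3,3)$, $(4,1,1)$, $(3,2,1)$, $(2,2,2)$ do lead to contradictions (patterns with two distinct values each repeated die on $\beta=-2\alpha$ against $\alpha^3=\beta^3=-1/2$; the rest on $\prod_i A_i\ne 1$) --- though you should write these cases out rather than assert they ``fail similarly.'' Your route trades the paper's reliance on the facet structure of $2_{21}$ and the incidence combinatorics from Green's book for an explicit polynomial elimination; it is more computational but arguably more elementary and self-contained. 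One quibble: your closing remark that the Type~I relations alone admit a six-parameter family is not accurate --- once $\eta=1$ is forced they already confine the $A_i$ to a finite set; the correct observation is that the multiplicative relations alone (satisfied by every $g\in T$) and the additive relations alone (satisfied by every $x\in\frakt$) each cut out six-dimensional families, so the two kinds of relation must genuinely be played against each other.
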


\begin{proof}
%
If $\{t^*_i(x)\mid 1\le i\le 27\}$ has only $1$ element, then since the set of differences $t_i^*-t_j^*$ span $\frakt^*$, it follows that $x=0$.  This implies $\rho(g)=0$, which is impossible.
If $\{t^*_i(x)\mid 1\le i\le 27\}$ has exactly $2$ elements, then there exists an affine linear function on $\frakt$ which takes the value $0$ and $1$ on every element $t^*_i$ and therefore 
a value in $[0,1]$ on every point of the convex hull of $\{t^*_i\mid 1\le i\le 27\}$ in $X^*(T)\otimes\R\subset \frakt^*$.
This convex hull is the polytope denoted $2_{21}$ in Coxeter's notation, and each of its codimension $1$ faces has $6$ or $10$ vertices \cite[Example 8.5.16]{Green}, so this is not possible.
Therefore, $\{t^*_i(x)\mid 1\le i\le 27\}$ has at least $3$ values.

Endowing $X^*(T)\otimes \R$ with the Weyl-invariant inner product for which the roots have length $\sqrt 2$, we see that the inner product of any weight with any element of the root lattice is integral,
while the inner product of any weight of $V$ with itself is $4/3$ \cite[Planches]{Bourbaki4}.  Therefore, the inner product $\langle t^*_i,t^*_j\rangle$ of any two distinct weights of $V$ is congruent to $1/3$ (mod $1$) and contained in the interval $[-4/3,4/3)$.
The only possible values are therefore $1/3$ and $-2/3$.  In the former case, we say that $t^*_i$ and $t^*_j$ are \emph{skew}.  In the latter case, we say they are \emph{incident}.
(The terminology is motivated by the correspondence
\cite[Example~8.6.4]{Green} between the weights of $V$ and the $27$-lines on a cubic surface.)  For each weight, the number of incident weights is $10$ \cite[Lemma 10.1.6]{Green}.

We claim that Lemma~\ref{quadrangle} implies that $\{t^*_1(x),\ldots,t^*_{27}(x)\}$ has at most $2$ elements.
We prove this by contradiction, assuming that $t^*_i(x)$, $t^*_j(x)$, and $t^*_k(x)$ are pairwise distinct.  There are four possibilities regarding $\{t^*_i,t^*_j,t^*_k\}$ to consider.

\vskip 10pt\noindent
\textbf{Case 1.  No pair of the weights is incident}.  By \cite[Exercise 8.1.11]{Green} and \cite[Lemma 9.2.7]{Green}, the Weyl group of $E_6$ acts transitively on triples of pairwise skew weights.
By \cite[Lemma 10.1.9]{Green}, we may assume without loss of generality that $t^*_i$, $t^*_j$, and $t^*_k$ belong to a $6$-element set of pairwise skew weights and therefore, by \cite[Theorem 10.2.1]{Green}, to one half of a Schl\"afli double six.
In the other half, there are three weights which are linked to each of $t^*_i$, $t^*_j$, and $t^*_k$; choose one of them and denote it $t^*_l$.  
We claim that $-t^*_m-t^*_i$, $-t^*_m-t^*_j$, and $-t^*_m-t^*_k$ are all weights of $V$.  Indeed, they lie in the dual lattice to the root lattice of $E_6$, and each has the same length as a weight of $V$.  This implies that each is either a weight or the negative of a weight  \cite[Chapter 4, (122)]{CS}.  They are actually weights since their inner products with every weight is congruent to $1/3$ (mod $1$) rather than $2/3$.
However,
$$(-t^*_m-t^*_i) + t^*_i = (-t^*_m-t^*_j)+t^*_j = (-t^*_m-t^*_k) + t^*_k.$$
By Lemma~\ref{quadrangle}, this implies that $t^*_i(x)$, $t^*_j(x)$, and $t^*_k(x)$ are all roots of a common quadratic polynomial, which is absurd.

\vskip 10pt\noindent
\textbf{Case 2.  Exactly one pair of the weights is incident}.  We may assume $t^*_i$ and $t^*_j$ are incident.  We have
$$(t^*_i+t^*_j-t^*_k)^2 = \frac 43,$$
By the same reasoning as in Case 1, $t^*_i+t^*_j-t^*_k$ is a weight $t^*_l$,  so \eqref{equal sum} holds.  However, $t^*_k(x)$ is not a root of $(z-t^*_i(x))(z-t^*_j(x))$, contrary to assumption.

\vskip 10pt\noindent
\textbf{Case 3.  Exactly two pairs of the weights are incident}.  Assume that $t^*_i$ and $t^*_j$ are skew, and both are incident to $t^*_k$.
By \cite[Proposition 10.2.7(iii)]{Green},
the number of weights incident to both $t^*_i$ and $t^*_j$ (including $t^*_k$) is $5$, so the number of weights incident to at least one of the two is $15$, including $t^*_k$.
Therefore, among weights not in $\{t^*_i,t^*_j,t^*_k\}$, the number skew to both $t^*_i$ and $t^*_j$ is $10$, and the number incident to $t^*_k$ is $8$.  We conclude that there exists $t^*_m$ which is skew to all three.
Therefore, we may choose two elements of $\{t^*_i,t^*_j,t^*_k\}$ together with $t^*_m$ and obtain three weights, all taking different values in $x$, containing at most a single incident pair.  This is impossible by Cases 1 and 2.

\vskip 10pt\noindent
\textbf{Case 4.  All pairs of the weights are incident}.  The number of incident pairs with one element in $\{t^*_i,t^*_j,t^*_k\}$ and one element in its complement is $24$, so on average each weight in the complement is incident to one element of 
$\{t^*_i,t^*_j,t^*_k\}$.  Let $t^*_m$ be a weight incident to at most one element of $\{t^*_i,t^*_j,t^*_k\}$.  Without loss of generality, we may assume that $t^*_i$, $t^*_j$, and $t^*_m$ all take different values on $x$, and there are at most two incident pairs among them.  This is impossible by Cases 1--3.

\vskip 10pt\noindent
This proves the claim and therefore the proposition.

%
\end{proof}

\begin{prop}
If $G$ is of type $E_7$ and $\dim V = 56$, then $G\cap \g=\emptyset$.
\end{prop}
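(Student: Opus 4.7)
My plan is to leverage the self-duality of $V$ (every representation of $E_7$ is self-dual, since $-1\in W(E_7)$): by Lemma~\ref{self-dual}, every weight satisfies $\chi_i(g)=\pm\sqrt{-1}$, so setting $y=x/\sqrt{-1}$ gives $t_i^*(y)=\pm 1\in\R$ for each of the $56$ weights, placing $y$ in the real form $\frakt_\R$ of $\frakt$. The proposition then reduces to showing that no such $y$ exists, which is a very rigid combinatorial condition in a $7$-dimensional space.

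To make this tractable I would pass to the maximal-rank subgroup $A_7\hookrightarrow E_7$ (arising from the extended Dynkin diagram of $E_7$ by deleting the short-branch node). Since this $SL_8$-subgroup shares a maximal torus $T$ with $E_7$, one can write $\frakt_\R = \{(y_1,\ldots,y_8)\in\R^8 : \sum_i y_i=0\}$. The restriction $V|_{A_7}$ is a $56$-dimensional self-dual representation of $SL_8$; checking irreducible representations of $SL_8$ by dimension, the only self-dual option is $\Lambda^2\C^8\oplus\Lambda^6\C^8$ (the $56$-dimensional irreducible $\Lambda^3\C^8$ is not self-dual, and no other candidate has the right total dimension). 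The $T$-weights on $V$ are therefore $\pm(e_i+e_j)$ for $1\le i<j\le 8$, and the condition $t_i^*(y)=\pm 1$ becomes $y_i+y_j\in\{-1,+1\}$ for every $i<j$.

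The rest is a finite check. From $(y_i+y_j)-(y_i+y_k)=y_j-y_k\in\{-2,0,2\}$, the multiset $\{y_1,\ldots,y_8\}$ takes at most two distinct values, differing by $2$. If $k$ of the $y_i$ equal the larger value $b$ and $8-k$ equal $a=b-2$, then $\sum y_i=0$ forces $a=-k/4$, and the possible pair sums $\{2a,a+b,2b\}=\{-k/2,\,2-k/2,\,4-k/2\}$ form an arithmetic progression of common difference $2$. A direct sweep over $k\in\{0,1,\ldots,8\}$ shows no value of $k$ makes all occurring pair sums lie in $\{-1,+1\}$ (for instance $k=2$ yields $\{-1,1,3\}$ with $2b=3$ illegal, while $k=4$ yields $\{-2,0,2\}$, and $k=1$ yields only the two sums $\{-1/2,\,3/2\}$, neither of which is $\pm 1$). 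This contradiction completes the proof. The main conceptual step is the choice of the subgroup $A_7$ together with the branching rule for $V|_{A_7}$; once the weights are written in $SL_8$-coordinates, the remaining arithmetic is mechanical.
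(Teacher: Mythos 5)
Your proof is correct, and it takes a genuinely different route from the paper's. Both arguments start the same way, using Lemma~\ref{self-dual} to force $t_j^*(x)\in\{\pm i\}$, so that the $56$ weights must lie on a union of two parallel affine hyperplanes. The paper finishes geometrically: those two hyperplanes would have to support the convex hull of the weights, which is the Gosset polytope $3_{21}$, and this is ruled out by citing the facet structure of $3_{21}$ ($7$ or $12$ vertices per codimension-one face) from Green's book. You instead pass to the $A_7$ subsystem, write the $56$ weights as $\pm(e_i+e_j)$ in $\SL_8$-coordinates, and reduce everything to the elementary Diophantine condition $y_i+y_j\in\{\pm1\}$ with $\sum y_i=0$, which your two-value argument kills by a finite sweep. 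What your approach buys is self-containedness: it replaces the external citation of the combinatorics of $3_{21}$ with a transparent hand computation. What it costs is the branching identification $V|_{A_7}\cong \Lambda^2\C^8\oplus\Lambda^6\C^8$, and your justification of this step by ``dimension plus self-duality'' is the one soft spot: as stated it does not exclude, say, summands of the form $8\oplus\bar 8\oplus(\text{trivials})$. This is easily repaired --- since $A_7$ and $E_7$ share the maximal torus and the minuscule $56$ has no zero weight, there are no trivial summands, and then $28\oplus\overline{28}$ is the only self-dual dimension count available among $\SL_8$-irreducibles; alternatively one can just read the $56$ weights off the tables in \cite[Planches]{Bourbaki4} and verify directly that they are the $\pm(e_i+e_j)$. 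With that point tightened, the argument is complete.
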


\begin{proof}
There is a unique irreducible $56$-dimensional representation of $E_7$, and it is therefore self-dual.
By Lemma~\ref{self-dual}, we may assume that $t^*_j(x) = \chi_j(g) \in \{\pm i\}$ for $1\le j\le 56$.
This implies that the set of $t^*_j$ is a union of two hyperplanes, which must therefore bound the convex hull of $\{t^*_j\mid 1\le j\le 56\}$, which is the convex polytope $3_{21}$ in Coxeter's notation.
However, the codimension $1$ faces of $3_{21}$ each have $7$ or $12$ vertices \cite[Exercise 8.5.17]{Green}, so this is impossible.
\end{proof}

This concludes the proof of the only if direction of Theorem~\ref{main}.

\begin{lem}
Let $m\ge 2$ and $j\in [1,m-1]$ be integers, and let $V = \wedge^j \C^m$.  Let $G$ denote the image of $\SL_m(\C)$ in $\GL_n(V)$ via the exterior $j$th power map.  Then $G\cap \g\neq\emptyset$.
\end{lem}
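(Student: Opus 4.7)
The plan is to exhibit, by explicit calculation, diagonal elements $h\in\SL_m(\C)$ and $y\in\sl_m(\C)$ whose images under $\wedge^j$ and $d(\wedge^j)$ coincide as operators on $V=\wedge^j\C^m$. Since both operators are diagonal in the standard weight basis $\{e_{i_1}\wedge\cdots\wedge e_{i_j}\mid i_1<\cdots<i_j\}$, the required equality $\wedge^j h = d(\wedge^j)(y)$ amounts to $\binom{m}{j}$ scalar identities $\prod_{i\in S}a_i=\sum_{i\in S}b_i$, one per $j$-subset $S\subset\{1,\dots,m\}$, where $(a_i)$ and $(b_i)$ are the diagonal entries of $h$ and $y$, subject to the constraints $\prod_i a_i=1$ and $\sum_i b_i=0$.

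A preliminary observation is that $h$ cannot be a scalar matrix, for then $\wedge^j h$ would be a nonzero scalar operator on $V$, whereas $d(\wedge^j)(y)$ lies in $\sl(V)$ and is traceless. I would therefore try the simplest non-scalar ansatz, with only two distinct blocks: $h=\diag(a,\dots,a,b)$ and $y=\diag(\beta,\dots,\beta,\gamma)$. The $\binom{m}{j}$ equations then collapse to just two, according as $m\notin S$ or $m\in S$, namely $a^j=j\beta$ and $a^{j-1}b=(j-1)\beta+\gamma$.

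Together with the determinant and trace conditions $a^{m-1}b=1$ and $(m-1)\beta+\gamma=0$, eliminating $\beta=a^j/j$, $\gamma=-(m-1)a^j/j$, and $b=a(j-m)/j$ reduces the whole system to the single relation $a^m=j/(j-m)$, which has a nonzero complex solution precisely because $1\le j\le m-1$ forces $j-m\neq 0$. Fixing any such $a$ and defining $b,\beta,\gamma$ accordingly produces the required pair, and then $\wedge^j h=d(\wedge^j)(y)\in G\cap\g$. The computation presents no real obstacle; the one creative step is selecting the two-block ansatz, which is natural given that any more symmetric (scalar) choice is ruled out by the tracelessness of $\sl(V)$.
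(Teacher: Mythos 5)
Your proposal is correct and is essentially the paper's own proof: the paper likewise takes $h=\diag(a,\dots,a,a^{1-m})$ with $a^m=j/(j-m)$ and observes that its image on $\wedge^j\C^m$ is the scalar $a^j/j$ times $\rho_*(\diag(1,\dots,1,1-m))$, which is exactly your two-block ansatz after elimination. The only difference is presentational — you derive the condition $a^m=j/(j-m)$ by solving the system, while the paper states the choice up front and verifies it.
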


\begin{proof}
Choose $a\in \C$ such that $a^m = \frac j{j-m}$, and let $b = a^{1-m}$.  Let $e_1,\ldots,e_m$ denote the standard basis of $\C^m$.
We fix an ordered basis of $V$ consisting of the vectors of the form $e_{i_1}\wedge\cdots\wedge e_{i_j}$, where $i_1>\cdots>i_j$, taken in lexicographic order.
Thus, the image of $\diag(a,\ldots,a,a^{1-m})\in \SL_m(\C)$ in $\GL(V)$ is
$$\diag(\underbrace{a^j,\ldots,a^j}_{\binom{m-1}j},\underbrace{a^{j-m},\ldots,a^{j-m}}_{\binom{m-1}{j-1}})=\frac{a^j}j\diag(\underbrace{j,\ldots,j}_{\binom{m-1}j},\underbrace{j-m,\ldots,j-m}_{\binom{m-1}{j-1}})\in\g.$$
\end{proof}

\begin{lem}
Let $V$ be a representation of $G$, $T$ a maximal torus of $G$, and $\frakt$ the Lie algebra of $T$.
If $x\in \frakt$ satisfies $t^*_j(x) \in \{\pm i\}$ for all $j$, then $\rho_*(x)\in G\cap \g$.
\end{lem}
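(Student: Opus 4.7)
The plan is simply to exhibit an explicit $g \in T$ with $\rho(g) = \rho_*(x)$, by exponentiating a scalar multiple of $x$. The point is that $\rho_*(x)$ is already diagonal in a weight basis with known eigenvalues $t^*_j(x) \in \{\pm i\}$, and these values are precisely what one gets by exponentiating $\pm \pi i/2$.

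Concretely, I would set
\[
g := \exp\!\bigl(\tfrac{\pi}{2}\, x\bigr) \in T \subseteq G,
\]
which makes sense since $\frakt$ is a complex Lie algebra and $x \in \frakt$. Using the commutative diagram defining the identification $X^*(T) \subset \frakt^*$, for each weight character $\chi_j$ I would compute
\[
\chi_j(g) = \exp\!\bigl(t^*_j(\tfrac{\pi}{2} x)\bigr) = \exp\!\bigl(\tfrac{\pi}{2} t^*_j(x)\bigr).
\]
Since $t^*_j(x) \in \{\pm i\}$, this gives $\chi_j(g) = e^{\pm \pi i/2} = \pm i = t^*_j(x)$, where the sign matches that of $t^*_j(x)$.

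Finally I would observe that in the weight basis $e_1, \ldots, e_n$ both operators act diagonally: $\rho(g)$ has diagonal entries $\chi_j(g)$, and $\rho_*(x)$ has diagonal entries $t^*_j(x)$. These diagonals coincide by the previous step, so $\rho(g) = \rho_*(x)$. Since trivially $\rho_*(x) \in \rho_*(\g)$, this shows $\rho_*(x) \in \rho(G) \cap \rho_*(\g) = G \cap \g$.

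There is no real obstacle here; the statement is essentially a reversal of the reasoning used earlier in the paper (where the hypothesis $\rho(g)=\rho_*(x)$ forced $x$ to lie in $\frakt$ and the $\chi_j(g)$ to equal $t^*_j(x)$). The only thing one must verify carefully is the compatibility of the exponentials, which is exactly the content of the commutative diagram already introduced.
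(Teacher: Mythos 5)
Your proof is correct, and it takes a genuinely different route from the paper's. You construct an explicit preimage $g=\exp\bigl(\tfrac{\pi}{2}x\bigr)\in T$ and use the commutative diagram defining the identification $X^*(T)\subset\frakt^*$ to get $\chi_j(g)=e^{\frac{\pi}{2}t^*_j(x)}=t^*_j(x)$ (since $e^{\pm\pi i/2}=\pm i$), so $\rho(g)$ and $\rho_*(x)$ agree on the weight basis; this is airtight, as the basis $e_1,\ldots,e_n$ consists of $T$-weight vectors, making both operators diagonal there. The paper never names a preimage: it realizes $\rho(T)$ inside the group $D$ of invertible diagonal matrices as the common kernel of the monomial characters $\diag(c_1,\ldots,c_n)\mapsto c_1^{a_1}\cdots c_n^{a_n}$ indexed by relations $\chi_1^{a_1}\cdots\chi_n^{a_n}=1$, and then checks that $\diag(t^*_1(x),\ldots,t^*_n(x))$ satisfies every such relation, because $\sum_j a_j t^*_j(x)=0$ gives $\sum_j a_j\epsilon_j=0$ and hence $\prod_j(\epsilon_j i)^{a_j}=i^{\sum_j a_j\epsilon_j}=1$. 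Your argument is shorter and exploits the transcendental exponential already set up in the paper; the paper's is purely algebraic, proving membership in an algebraic subgroup via its defining equations, which is the kind of argument that survives in settings where an exponential map is unavailable. Both are complete proofs of the lemma.
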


\begin{proof}
Choose a basis of $V$ whose $j$th vector belongs to the ($1$-dimensional) $\chi_j$-weight space of $V$.  In terms of this basis, $\rho_*(x)$ is given by the matrix
$\diag(t^*_1(x),\ldots,t^*_n(x))$.  Inside the group $D$ of invertible diagonal matrices, the image $\rho(T)$ is a closed subgroup \cite[Corollary 1.4(a)]{Borel} and it is diagonal, so it is the 
intersection of the kernels of characters of $D$ which vanish on it \cite[Proposition 8.2(c)]{Borel}.  An element of $X^*(D)$ vanishes on $\rho(T)$ if and only if it lies in the kernel of the natural homomorphism
$X^*(D)\to X^*(T)$.  Thus, if $A$ is the set of $n$-tuples $(a_1,\ldots,a_n)\in \Z^n$ such that $\chi_1^{a_1}\cdots\chi_n^{a_n}=1$, then
$$\rho(T) = \{\diag(c_1,\ldots,c_n)\in D\mid c_1^{a_1}\cdots c_n^{a_n}=1\,\forall (a_1,\ldots,a_n)\in A\}.$$

For all $(a_1,\ldots,a_n)\in A$, we have $\sum_j a_jt^*_j = 0$, so $\sum_j a_jt^*_j(x) = 0$.  Writing $t^*_j(x) = \epsilon_j i$, we have $\sum_j a_j \epsilon_j=0$, so
$$\prod_j t^*_j(x)^{a_j} = \prod_j (\epsilon_j i)^{a_j} = \prod_j (i^{\epsilon_j})^{a_j} = i^{\sum_j a_j\epsilon_j} = 1.$$
This implies $\rho_*(x)\in \rho(T)$.

\end{proof}

For minuscule representations when $G$ is of type $B$, $C$, or $D$, such an $x$ always exists; it suffices to find an element of $X^*(T)\otimes \R$ whose inner product with each weight of $V$ is $\pm 1$.
Using the notation of \cite[Planches]{Bourbaki4}, these can be chosen as follows.  For type $B_r$ and highest weight $\varpi_r$, we take $(1,0,\ldots,0)$.  For type $C_r$ and highest weight $\varpi_1$, we take $(1/2,0,\ldots,0)$.
For type $D_r$ and highest weight $\varpi_1$, we take $(1/2,0,\ldots,0)$.  For type $D_r$ and highest weight $\varpi_{r-1}$ or $\varpi_r$, we take $(1,0,\ldots,0)$.

\end{document}